\def \C {\ensuremath{\mathbb C}}
\def \R {\ensuremath{\mathbb R}}
\def \Z {\ensuremath{\mathbb Z}}
\def\cF{\mathcal{F}}
\newcommand{\prob}[1]{\ensuremath{\mathbf{P}\left(#1\right)}}
\newcommand{\expect}[1]{\ensuremath{\mathbf{E}\left(#1\right)}}
\newcommand{\condprob}[2]{\ensuremath{\mathbf{P}\big(#1\bigm|#2\big)}}
\newcommand{\condexpect}[2]{\ensuremath{\mathbf{E}\big(#1\bigm|#2\big)}}
\newcommand{\ind}[1]{\ensuremath{{1\!\!1}{\{#1\}}}}
\def\pt{\partial_t}
\def\px{\partial_x}
\def\ph{\partial_h}
\newcommand{\abs}[1]{\left|{#1}\right|}
\newtheorem {theorem}{Theorem}
\newtheorem {lemma}{Lemma}
\newtheorem {corollary}{Corollary}
\newtheorem {proposition}{Proposition}
\newtheorem* {theorem*}{Theorem}
\newtheorem* {thm*}{Theorem}
\newtheorem* {lemma*}{Lemma}
\newtheorem* {lem*}{Lemma}
\newtheorem* {corollary*}{Corollary}
\newtheorem* {cor*}{Corollary}
\newtheorem* {proposition*}{Proposition}
\newtheorem* {prop*}{Proposition}
\newtheorem* {definition*}{Definition}
\newtheorem* {def*}{Definition}
\newtheorem* {conjecture*}{Conjecture}
\newtheorem* {remark*}{Remark}
\newtheorem* {rem*}{Remark}
\theoremstyle{definition}
\newtheorem*{remark}{Remark}
\def\be{\begin{equation}}
\def\ee{\end{equation}}
\def\bea{\begin{eqnarray}}
\def\eea{\end{eqnarray}}
\def \Ai {\ensuremath{\mbox{Ai}}}
\title{Marginal densities of the ``true'' self-repelling motion}
\author{{\sc Laure Dumaz} \qquad and \qquad {\sc B\'alint T\'oth}}
\begin{document}

\maketitle

\begin{abstract}
Let $X(t)$ be the \emph{true self-repelling motion (TSRM)} constructed in \cite{toth_werner_98}, $L(t,x)$ its occupation time density (local time) and $H(t):=L(t,X(t))$ the height of the local time profile at the actual position of the motion. The joint distribution of $(X(t),H(t))$ was identified in \cite{toth_95} in somewhat implicit terms. Now we give explicit formulas for the densities of the marginal distributions of $X(t)$ and $H(t)$. The distribution of $X(t)$ has a particularly surprising shape (see Picture (\ref{fignu1}) below): It has a sharp local \emph{minimum} with discontinuous derivative at $0$. As a consequence we also obtain a precise version of the large deviation estimate of \cite{dumaz_11}.
\end{abstract}

\begin{figure}[!h]
\begin{minipage}[b]{7cm}
\centering\epsfig{figure=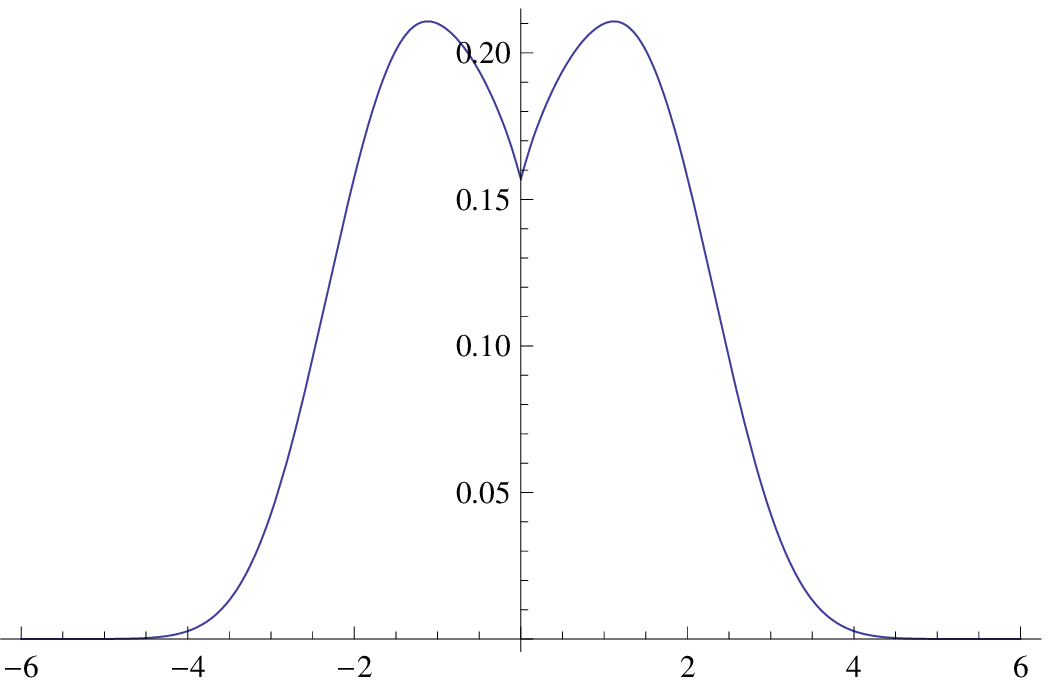,width=7cm}
\caption{Density of $X(1)$
\label{fignu1}}
\end{minipage} \hfill
\begin{minipage}[b]{7cm}
\centering\epsfig{figure=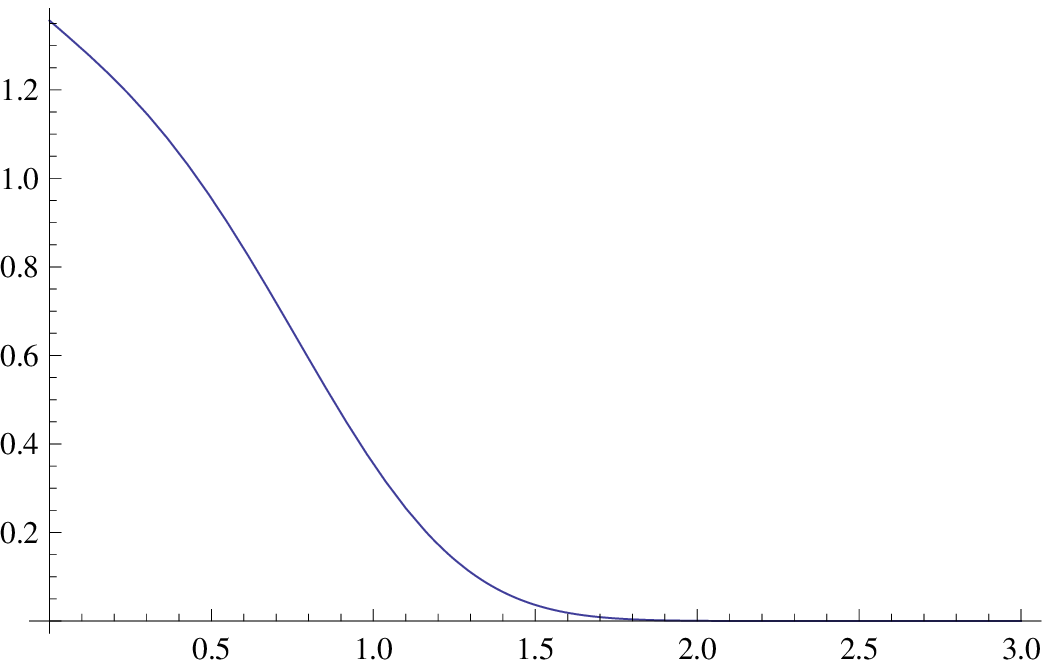,width=7cm}
\caption{Density of $H(1)$
\label{fignu2}}
\end{minipage}
\end{figure}

\section{Introduction and main results}
\label{s:intro}

\subsection{Introduction}
\label{ss:intro}

In the present paper, we study some marginal densities of a self-interacting one dimensional process
called the true self-repelling motion (TSRM), defined in \cite{toth_werner_98}. The true self-repelling motion is a continuous real-valued process $(X(t),\; t \ge 0)$ that is
locally self-interacting with its past occupation-time: It is pushed away from the places it has visited the most in its past. This process has a completely singular behavior compared to diffusions: For example, it has a finite variation of order 3/2, which contrasts with the finite quadratic variation of semi-martingales.

A crucial property of the TSRM is the existence of a continuous density for its occupation-time measure, denoted $L(t,x)$ and named again ``local time'' by analogy with semi-martingales (see Theorem $4.2$ of \cite{toth_werner_98}). This enables to prove that the TSRM is driven by the negative gradient of its own occupation time density $L(t,x)$. Written \emph{formally}:
\begin{align}
\label{tsrmdiffeq}
&
\pt X(t) = -\px L(t,X(t)),
&&
\pt L(t,x) = \delta(x-X(t))
\\[6pt]
\label{tsrm0ic}
&
X(0)=0,
&&
L(0,x)=0.
\end{align}
The driving mechanism \eqref{tsrmdiffeq} is to be considered in a properly regularized form. Making proper mathematical sense of this mechanism is the main content of \cite{toth_werner_98}.

The local time at the current position of the TSRM is denoted by
\begin{align*}
H(t) := L(t,X(t))
\end{align*}
and is called the \emph{height process}.
The processes $X(t)$, the local time $L(t,x)$ and the height $H(t)$ scale as follows:
\[
X(at) \sim a^{2/3} X(t),
\qquad
L(at, a^{2/3}x) \sim a^{1/3}L(t,x),
\qquad
H(at) \sim a^{1/3} H(t).
\]
Throughout this paper, the notation $\sim$ means equality in distribution between the two terms.

Scaling and many other features of TSRM and related objects show similarities with the distributions appearing in the KPZ (Kardar-Parisi-Zhang) universality class. The computations of the present paper also show resemblances with KPZ-related computations (e.g. the natural appearance of Airy functions in both contexts). As shown in \cite{warren_07} and \cite{tribe_zaboronski_11} determinental and Pfaffian structures also appear in the Brownian Web (which is in the background of the construction of TSRM) just like in models belonging to the KPZ universality class. However, it is not yet clear whether there exist some deeper connections between TSRM and KPZ.

We will examine in this paper the distributions of the marginals $X(t)$ and $H(t)$ at the fixed time $t$. It turns out that it is more convenient to work at first with an exponential random time of parameter $s$ independent of the process instead of with a fixed time.
With Feynman-Kac formulas applied for the Brownian motion, it is possible to deduce explicit expressions for its marginals (in terms of Airy function, confluent hypergeometric functions and Mittag-Leffler density). It leads to Pictures \ref{fignu1} and \ref{fignu2} for $t=1$. We found the shape of the density of the position $X(t)$ rather unusual and surprizing. Indeed, it  has a {\it sharp wedge-like local
minimum} at $x=0$ with discontinuous derivative, and global maxima
away from zero. It means that at any positive time, the process still
remembers its starting point and it is strongly pushed away from it.
 These results are also confirmed by numerical simulations (see Figures \ref{histX}. and \ref{histH}.).

\subsection{Review of background and notations}
\label{ss:review}

The \emph{true self-avoiding walk (TSAW)} is a nearest neighbor random walk with long memory on $\Z^d$ pushed by the negative gradient of its own occupation time measure (local time). For historical background see \cite{amit_parisi_peliti_83}, \cite{obukhov_peliti_83}, \cite{peliti_pietronero_87} or the introductions to \cite{toth_95} or \cite{toth_werner_98}. In the present paper we are interested only in the one-dimensional case.

A discrete time version considered and analyzed in \cite{toth_95} is the following. Let $n\mapsto S(n)\in\Z$ be a nearest neighbor walk on $\Z$. The occupation time on lattice bonds, lattice sites, and the negative (discrete) gradient of bond occupation time are in turn:
\begin{align*}
&
\ell(n,j+1/2):=
\#\{0\le m< n: (S(m)+S(m+1))/2=j+1/2\},
\\[6pt]
&
\ell(n,j):=
\big(\ell(n,j-1/2)+\ell(n,j+1/2)\big)/2,
\\[6pt]
&
\delta(n,j):=
\ell(n,j-1/2)-\ell(n,j+1/2),
\end{align*}
where $j\in\Z$ label lattice sites, $j\pm\frac12\in\Z+\frac12$ label lattice bonds and $n\in\Z_+$ is the (discrete) time of the process.

The TSAW (with bond repulsion) is defined by the law:
\[
\condprob{S(n+1)=j\pm1}{\cF_n, S(n)=j}=
\frac{w(\pm\delta(n,j))}{w(\delta(n,j))+w(-\delta(n,j))},
\]
where $w:\Z\to(0,\infty)$ is a fixed rate function which is assumed nondecreasing and non-constant. In \cite{toth_95} limit theorem was proved for the distribution of the displacement of the TSAW, with time-to-the-two-thirds scaling. Later, the convergence was established for the whole process \cite{toth_werner_98, newman_04}. In order to expose this and also prepare our present work we need some notation and preliminary concepts.

\medskip

\noindent Let $\R\ni y\mapsto B(y)\in\R_+$ be a \emph{two sided Brownian motion}, starting from level $h\ge0$, and let $\omega'$ be the first hitting time of zero by the backward Brownian motion, $\omega_x$ the first one after time $x \ge 0$ of the forward Brownian motion:
\begin{align}
\label{defw'}
&
\omega':=\sup\{y\le0: B(y)=0\},
\\[6pt]
\label{defwx}
&
\omega_{x}:=\inf\{y\ge x: B(y)=0\}, \qquad x \ge 0,
\end{align}
and denote:
\begin{align}
\label{defTx}
T_{x} = \int_{\omega'}^{\omega_{x}} |B(t)| dt.
\end{align}

The random variable $T_{x}$ is a.s. positive and finite and has an absolutely continuous distribution with density
\[
\varrho(t;x,h):=
\pt \condprob{T_{x}<t}{B(0) = h},
\qquad
t\in\R_+, \,\, x\in\R_+, \,\, h\in\R_+.
\]
The Laplace transform of this density with respect to the variable $t$ is
\begin{align}
\label{rholap}
\hat\varrho(s;x,h)
:=
s\int_0^\infty e^{-st}\varrho(t;x,h) dt
=
s \condexpect{e^{-sT_{x}}}{B(0) = h}.
\end{align}
We extend $\varrho$ and $\hat \varrho$ to $x \in \R$ as \emph{even} functions: $\varrho(t;x,h) = \varrho(t;|x|,h)$ and $\hat \varrho(s;x,h) = \hat \varrho(s;|x|,h)$ for all $x \in \R$, $h \ge 0$, $s\ge 0$ and $t\ge 0$.  We will see soon that $\varrho(t;x,h)$ (respectively, $\hat \varrho(s;x,h)$)  viewed as functions of the variables $(x,h)$ turn out to be the density of of the joint distribution of $(X_t,H_t)$  (respectively, $(X_{\theta_s},H_{\theta_s})$ for $\theta_s$ independent exponentially distributed random variable of expectation  $s^{-1}$). Note that \emph{a priori} is not at all evident that those functions define indeed densities on $\R \times \R^+$, see \eqref{intnu}.

We introduce the following marginals:
\begin{align*}
&
\varrho_1(t;x):=\int_{0}^\infty \varrho(t;x,h)dh,
\quad
&&
\varrho_2(t;h):=\int_{-\infty}^\infty \varrho(t;x,h)dx,
\\[6pt]
&
\hat\varrho_1(s;x):=\int_{0}^\infty \hat\varrho(s;x,h)dh,
\quad
&&
\hat\varrho_2(s;h):=\int_{-\infty}^\infty \hat\varrho(s;x,h)dx.
\end{align*}
Brownian scaling
$\big(B(a\cdot)\,\big|\,B(0)=\sqrt a h\big) \sim
\big(\sqrt a B(\cdot)\,\big|\,B(0)= h \big)$ implies the following scaling relations:
\begin{align}
\label{rhoscale}
&
\varrho(t;x,h)= t^{-1} \nu(t^{-2/3}x,t^{-1/3}h),
&&
\varrho_1(t;x)= t^{-2/3} \nu_1(t^{-2/3}x),
&&
\varrho_2(t;h)= t^{-1/3} \nu_2(t^{-1/3}h),
\\[6pt]
\notag
&
\hat\varrho(s;x,h)= s \hat \nu(s^{2/3}x,s^{1/3}h),
&&
\hat\varrho_1(s;x)= s^{2/3} \hat\nu_1(s^{2/3}x),
&&
\hat\varrho_2(s;h)= s^{1/3} \hat\nu_2(s^{1/3}h),
\end{align}
where
\begin{align*}
&
\nu(x,h):=\varrho(1;x,h),
&&
\nu_1(x):=\varrho_1(1;x),
&&
\nu_2(h):=\varrho_2(1;h),
\\[6pt]
&
\hat\nu(x,h):=\hat\varrho(1;x,h),
&&
\hat\nu_1(x):=\hat\varrho_1(1;x),
&&
\hat\nu_2(h):=\hat\varrho_2(1;h).
\end{align*}

We recall the main results of \cite{toth_95} and \cite{toth_werner_98} and more recent large deviation estimates from \cite{dumaz_11} which are necessary background material for the present paper:

\begin{itemize}

\item
Theorem 2 of \cite{toth_95}:
$(x,h)\mapsto\nu(x,h)$ and $(x,h)\mapsto\hat\nu(x,h)$ are probability densities on $\R\times\R_+$:
\begin{align}
\label{intnu}
\int_{-\infty}^\infty\int_0^\infty \nu(x,h)dhdx
=1=
\int_{-\infty}^\infty\int_0^\infty \hat\nu(x,h)dhdx.
\end{align}

\item
Theorem 3 of \cite{toth_95}:
Let $s>0$ be fixed and $\vartheta_{n}$ a sequence of geometrically distributed stopping times with $\expect{\vartheta_{n}}=n/s$, independent of the TSAW $S(\cdot)$. The following limit theorem holds:
\begin{align}
\label{limit}
\big(
(\alpha n)^{-2/3}S(\vartheta_n),
(\alpha n)^{-1/3}\ell(\vartheta_n, S(\vartheta_n))
\big)
\Rightarrow
\big(\hat X_s, \hat H_s\big),
\end{align}
where the density of the joint distribution of $\big(\hat X_s, \hat H_s\big)$ is: 
\begin{align*}
\partial^2_{xh} \prob{\hat X_s<x, \hat H_s)<h}
=
\hat\varrho(s;x,h).
\end{align*}
The constant $\alpha$ in the norming factors on the left hand side of \eqref{limit} depend only on the rate function $w(\cdot)$ and is given by formula (1.23) in \cite{toth_95}.
\\[6pt]
Actually, in \cite{toth_95} only the case $w(z)=e^{-\beta z}$ was considered and only the limit theorem for the displacement was stated explicitly. However, the proofs implicitly contain these extensions.

\item
Remark after Theorem 3 of \cite{toth_95}: Assume that the sequence of random variables $\big(n^{-2/3}S(n), n^{-1/3}\ell(n, S(n))\big)\in\R\times\R_+$ is tight.  Let $t>0$ be fixed. Then
\[
\big(
(\alpha n)^{-2/3}S([nt]),
(\alpha n)^{-1/3}\ell([nt], S([nt]))
\big)
\Rightarrow
\big(X(t), H(t)\big),
\]
where the density of the joint distribution of $\big(X(t), H(t)\big)$ is:
\begin{align}
\label{jointdensity}
\partial^2_{xh} \prob{X(t)<x, H(t)<h}
=
\varrho(t;x,h).
\end{align}
\medskip
See also \cite{toth_veto_11} for similar results for a continuous time version with on-site-repulsion.

\item
Finally, in \cite{toth_werner_98} the \emph{true self-repelling motion (TSRM)} mentioned in the first paragraph of this paper, was constructed. This is the scaling limit of the properly rescaled TSAW, $t\mapsto(\alpha n)^{-2/3} S([nt])$, as $n\to\infty$. Actually, this construction goes "backwards". First the three-parameter process $\{\Lambda_{x,h}(y): x\in\R, \ h\in\R_+, \ y\in\R\}$ is constructed, which later got the catchy name \emph{Brownian Web}, \cite{fontes_isopi_newman_ravishankar_04}. Then, from $\Lambda_{\cdot,\cdot}(\cdot)$, by an \emph{inverse Ray-Knight procedure} the process $t\mapsto(X(t),H(t))\in\R\times\R_+$ is built. From this construction it follows that $t\mapsto X(t)$ has a regular local time (i.e. occupation time density) process $\{L(t,x): t\in\R_+, x\in\R\}$, and the Brownian Web $\Lambda_{\cdot,\cdot}(\cdot)$ is identified as the local time profile of $X(\cdot)$ stopped at inverse local times $\tau_{x,h}:=\inf\{t: L(t,x)\ge h\}$:
\[
\Lambda_{x,h}(y)=L(\tau_{x,h}, y), 
\qquad
H(t)=L(t,X(t)). 
\]
From this inverse Ray-Knight construction the identity \eqref{jointdensity} also drops out.

\item
By choosing the initial conditions
\begin{align}
\notag
&
X(0)=0,
&&
L(0,x)=B(x),
\end{align}
where $x\mapsto B(x)$ is a two sided 1d Brownian motion, rather than \eqref{tsrm0ic}, we obtain a version of the TSRM \emph{with stationary increments}. Denote this process $X_{\mathrm{st}}(t)$, its local time $L_{\mathrm{st}}(t,x)$, and the height process $H_{\mathrm{st}}(t):= L_{\mathrm{st}}(t,X_{\mathrm{st}}(t))$. The scaling of the stationary version is similar to that of the TSRM starting from initial conditions \eqref{tsrm0ic}:
\[
X_{\mathrm{st}}(at) \sim a^{2/3} X_{\mathrm{st}}(t),
\quad
L_{\mathrm{st}}(at, a^{2/3}x) \sim a^{1/3}L_{\mathrm{st}}(t,x),
\quad
H_{\mathrm{st}}(at) \sim a^{1/3} H_{\mathrm{st}}(t).
\]
Moreover, the following equality in distribution holds: 
\begin{align}
\label{xstatsimx}
X_{\mathrm{st}}(1) \sim 2^{-1/3} X(1).
\end{align}
This identity in law follows directly from the construction of $t\mapsto (X_{\mathrm{st}}(t), H_{\mathrm{st}}(t))$ outlined in section 10. of \cite{toth_werner_98}. The main point is that having the bottom line of the local time profiles a Brownian motion $B(x)$ rather identically $0$ changes the distribution of the \emph{one-dimensional marginal distributions} of the displacement by a multiplicative factor only. However, we also emphasize that this is valid only for the one-dimensional marginals of $X_{\mathrm{st}}(t)$ and in particular  no distributional identity of this sort holds for the 1d marginals $H_{\mathrm{st}}(1)$.

\item
In \cite{dumaz_11}, using TSRM's construction, asymptotics for the tails of $X(1)$ and $H(1)$ are found. It is shown that:
\begin{align*}
&
\prob{X(1)>x} = \exp\left(- (4 \delta'^3_1 x^3)/27 + O(\ln(x))\right),
\\[6pt]
\notag
&
\prob{H(1)>h} = \exp\left(-(8 h^{3})/9 + O(\ln(h))\right),
\end{align*}
where $\delta'_1$ is the absolute value of the first negative zero of the derivative of a renormalized Airy function. It is precisely defined in paragraph \ref{ss:appAiry}.

In the stationary case, the tails of the distributions are also computed. In this case, due to \eqref{xstatsimx}, the constant in front of the $x^3$ is multiplied by $2$ for $X_{\mathrm{st}}(1)$. The height $H_{\mathrm{st}}(1)$ behaves differently as the initial local time helps to have large values of $|H_{\mathrm{st}}(1)|$. It is shown that there exists $\eta >0$ such that for every large $h$:
\begin{align*}
\exp\left(- 1/\eta \,h^{3/2}\right)\le \prob{\pm H_{\mathrm{st}}(1)>h} \le \exp\left(-\eta\, h^{3/2}\right).
\end{align*}

\end{itemize}

\subsection{Main results}
\label{ss:results}

In the present paper, our main goal is to identify the marginal distributions, $\nu_1(x)$ and $\nu_2(h)$ more explicitly than it was done in \cite{toth_95}.
Our main results are collected in the following theorem:

\begin{theorem}
\label{thm:main}
(i)
The second marginal densities $\hat\nu_2(h)$ and $\nu_2(h)$ are
\begin{align}
\label{nu2hat}
&
\hat \nu_2(h)
=
- 2 u(h) u'(h) = -\ph \left(u^2\right)(h),
\\[6pt]
\label{nu2}
&
\nu_2(h)
=
\frac{2 \cdot 6^{1/3} \sqrt{\pi}}{\Gamma(1/3)^2}
e^{-(8h^3)/9} U(1/6, 2/3; (8h^3)/9),
\end{align}
where $u:\R_+\to\R$ is the (rescaled and normed) Airy function, as defined in subsection \ref{ss:appAiry}, \eqref{airyeq}, \eqref{ubc}, and $U(1/6,2/3;\cdot)$ is the confluent hypergeometric function of the second kind (or Tricomi function), given by the integral representation \eqref{Tricomifnc} in subsection \ref{ss:confluent_hypergeometric_functions}.
\\[6pt]
(ii)
The first marginal densities $\hat\nu_1(x)$ and $\nu_1(x)$ are
\begin{align}
\label{nu1hat}
&
\hat\nu_1(x)
=
\sum_{k = 1}^{\infty}
p_k \,
\frac{\delta'_k}{2} \exp(-\delta'_k \abs{x}),
\\[6pt]
\label{nu1}
&
\nu_1(x)
=
\sum_{k = 1}^{\infty}
p_k \,
\frac{\delta'_k}{2} f_{2/3}(\delta'_k \abs{x}),
\end{align}
where $f_{2/3}$ is the Mittag-Leffler density function as defined in subsection \ref{ss:appMittag-Leffler}, the scaling factors $\delta'_k$ are the zeros of the derivative of the (rescaled and normed) Airy function $u(\cdot)$, and the coefficients $p_k$ of the convex combinations are those given in subsection \ref{ss:appAiry}, \eqref{mixweights}.
\end{theorem}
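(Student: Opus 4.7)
My plan is to exploit the path decomposition $T_x = T^- + T^+_x$ with $T^- := \int_{\omega'}^0 |B(t)|\,dt$ and $T^+_x := \int_0^{\omega_x}|B(t)|\,dt$, whose two parts are independent given $B(0) = h$. Time-reversing the backward Brownian motion and applying a standard Feynman--Kac computation identifies $\mathbf{E}_h[e^{-sT^-}]$ with the rescaled Airy function $u_s$ of \eqref{airyeq}--\eqref{ubc}, satisfying $\tfrac12 u''_s(h) = sh\,u_s(h)$, $u_s(0)=1$, $u_s(\infty)=0$. The strong Markov property at time $x$ gives $\mathbf{E}_h[e^{-sT^+_x}] = \psi_s(h,x)$ where
\[
\psi_s(h,x) := \mathbf{E}_h\bigl[e^{-s\int_0^x |B(t)|\,dt}\,u_s(|B(x)|)\bigr] = \bigl(e^{-x\mathcal L}\,u_s(|\cdot|)\bigr)(h),
\]
with $\mathcal L := -\tfrac12\partial_h^2 + s|h|$ acting on even functions on $\R$. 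Consequently for $x\ge 0$,
\[
\hat\varrho(s;x,h) = s\,u_s(h)\,\psi_s(h,x),
\]
which is the common starting point for both marginals.

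\textbf{Part (i).} Integrating in $x$ yields $\hat\varrho_2(s;h) = 2s\,u_s(h)\,\mathcal L^{-1}[u_s(|\cdot|)](h)$. The key algebraic fact is that $-u'_s/s$ is this inverse: differentiating the Airy ODE gives $\tfrac12 u'''_s = s\,u_s + sh\,u'_s$, so $\mathcal L(-u'_s/s) = u_s$ pointwise for $h>0$; since $u''_s(0) = 0$ the even extension of $-u'_s(|\cdot|)/s$ lies in $C^1(\R)$ and decays at infinity, which pins it down as the unique bounded solution. This gives $\hat\varrho_2(s;h) = -2u_s u'_s = -\partial_h(u_s^2)$ and at $s=1$ is \eqref{nu2hat}. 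For \eqref{nu2}, I would Laplace-invert in $s$: with $u_s(h) = \Ai((2s)^{1/3}h)/\Ai(0)$ the substitution $\lambda = (2s)^{1/3}$ turns the Bromwich integral into a contour integral of $\Ai(\lambda h)\,\Ai'(\lambda h)\,e^{\lambda^3/2}$, which matches $e^{-8h^3/9}\,U(1/6,2/3;\,8h^3/9)$ via the integral representation \eqref{Tricomifnc} of the Tricomi function.

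\textbf{Part (ii).} Integrating instead in $h$ gives $\hat\varrho_1(s;x) = s\int_0^\infty u_s(h)\,\psi_s(h,x)\,dh$. I would expand $\psi_s(\cdot,x) = e^{-x\mathcal L}u_s(|\cdot|)$ in the normalized even eigenfunctions $\tilde\phi^{(s)}_k$ of $\mathcal L$: these are proportional to $\Ai\bigl((2s)^{1/3}(|h|-\delta'_k s^{-1/3})\bigr)$ with eigenvalues $\delta'_k\,s^{2/3}$, the Neumann condition at $0$ forcing $\delta'_k = -a'_k/2^{1/3}$ with $a'_k$ the $k$-th negative zero of $\Ai'$. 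Parseval's identity gives
\[
\hat\varrho_1(s;x) = \frac{s}{2}\sum_{k=1}^\infty \bigl|\langle u_s(|\cdot|),\tilde\phi^{(s)}_k\rangle\bigr|^2\, e^{-\delta'_k s^{2/3}|x|}.
\]
The overlaps are evaluated by the Wronskian/Green's identity applied to the pair $(u_s,\phi^{(s)}_k)$, which reduces them to boundary data at $h=0$ involving $u'_s(0)$ and $\Ai(a'_k)$; combined with $\|\phi^{(s)}_k\|^2_{L^2(\R)} = 2\delta'_k s^{2/3}\Ai(a'_k)^2$, obtained from the classical antiderivative $\int_a^\infty \Ai(y)^2\,dy = \Ai'(a)^2 - a\Ai(a)^2$ evaluated at the Airy-prime zero $a=a'_k$, this produces closed-form coefficients. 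Setting $s=1$ I expect the series to match \eqref{nu1hat} with the weights $p_k$ of \eqref{mixweights}. Finally, using the scaling $\hat\varrho_1(s;x) = s^{2/3}\hat\nu_1(s^{2/3}x)$ and inverting the Laplace transform term by term -- the Laplace inverse of $s^{2/3}e^{-\delta'_k s^{2/3}|x|}$ being a rescaled Mittag-Leffler density $f_{2/3}$ -- yields \eqref{nu1}.

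\textbf{Main obstacle.} Part (i) reduces to a one-line algebraic identity (thanks to the Airy ODE) plus a standard Airy/Tricomi Laplace inversion. The hard part will be part (ii): both justifying completeness and absolute convergence of the spectral expansion of $\psi_s(\cdot,x)$ on the whole line, and carrying the evaluation of the overlaps $\langle u_s,\tilde\phi^{(s)}_k\rangle$ far enough in closed form so that the prefactors reconcile exactly with the normalization of $p_k$ specified in \eqref{mixweights}.
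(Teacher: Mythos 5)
Your proof is correct, and for part (ii) it takes a genuinely different route from the paper; part (i) is essentially the paper's argument in lightly different notation.

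For $\hat\nu_2$ you and the paper do the same thing: both reduce to the observation that differentiating the Airy ODE gives $\mathcal L(-u'_s/s)=u_s$, which, once the Neumann condition at $0$ and integrability at infinity are invoked, forces $\hat\varrho_2 = -2u_s u_s'$. The paper writes the same content as the second-order ODE $\big(u^2(u^{-2}\hat\nu_2)'\big)'=-4u^2$ and kills the two constants by normalization. For $\nu_2$ your Bromwich-contour sketch is plausible but you never actually verify that the contour integral of $\Ai(\lambda h)\Ai'(\lambda h)e^{\lambda^3/2}$ equals the stated Tricomi expression; the paper instead factors $u^2(z^{1/3})$ as a convolution square, uses the explicit inverse transform $f(s)=Cs^{-4/3}e^{-2/(9s)}$ of $u(z^{1/3})$, computes $f*f$ by a one-line change of variable, and closes with Kummer's identity \eqref{Kummer's_identity}. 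That route is both elementary and fully explicit, whereas yours would still need the product-of-Airy integral representation (as in \cite{reid_95}) to be carried out.

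For part (ii) your method is materially different and a nice alternative. The paper differentiates in $x$: using $\hat\nu = u\varphi$ and the PDE \eqref{phipde} the $h$-integral telescopes, $\px\hat\nu_1(x)=\tfrac12\,u'(0)\,w(x)$ with $w(x)=\varphi(x,0)$, then a Feynman--Kac Laplace transform in $x$ (Lemma \ref{lem:tildephi}) gives $\tilde w(\lambda)=\bigl(u'(\lambda)-u'(0)u(\lambda)\bigr)/(\lambda u'(\lambda))$, which the Mittag-Leffler-type partial fraction identity \eqref{keyidentity} turns into the sum of exponentials. You instead expand $\psi_s(\cdot,x)=e^{-x\mathcal L}u_s(|\cdot|)$ in the even eigenbasis of the Airy Hamiltonian $\mathcal L$ and use Parseval. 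I checked your missing overlap computation and it does close: with $\phi_k(h)=\Ai(2^{1/3}(h-\delta'_k))$ for $h\ge 0$, the Wronskian identity gives $\int_0^\infty u\,\phi_k\,dh = -u'(0)\Ai(a'_k)/(2\delta'_k\,2^{1/3}\cdot 3^{-2/3}\Gamma(2/3)^{-1})$ up to trivial constants, and together with $\|\phi_k\|^2 = 2\delta'_k\Ai(a'_k)^2$ one obtains $|c_k|^2 = u'(0)^2/(2(\delta'_k)^3) = p_k\delta'_k$ at $s=1$, exactly as required by \eqref{mixweights}. So your expansion reproduces \eqref{nu1hat}, and your moment/Laplace argument for \eqref{nu1} agrees with the paper's. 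The trade-off: your spectral approach makes the eigenvalue structure (i.e. the appearance of the $\delta'_k$) visible from the start and is conceptually illuminating, but it requires discussing completeness of the Neumann eigenbasis and justifying the interchange of sum and $x$-integral, whereas the paper bypasses the spectral decomposition entirely by the differentiation trick plus the scalar identity \eqref{keyidentity}, at the cost of that identity appearing somewhat out of thin air. Both are valid; yours is a legitimate independent proof once the overlap evaluation is actually carried out.

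Minor points worth fixing if you write this up: (a) state explicitly that $u_s(h)=u(s^{1/3}h)$ so that the Neumann condition $u_s''(0)=0$ is immediate from the Airy ODE at $h=0$; (b) replace the phrase ``which matches $e^{-8h^3/9}U(1/6,2/3;8h^3/9)$'' by an actual computation (either via the $f*f$ convolution as in the paper, or by quoting a product-of-Airy-functions integral representation); (c) in the Parseval step, verify $u_s(|\cdot|)\in L^2(\R)$ (exponential decay) and justify the interchange of $\int_0^\infty dx$ with the eigenfunction sum using the summability $\sum_k |c_k|^2/E_k<\infty$, which follows from $|c_k|^2\asymp (\delta'_k)^{-3}$ and $\delta'_k\asymp k^{2/3}$.
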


It is worth to compare the computed graphs of these density functions with histograms of the empirical distributions on $X(1)$ and $H(1)$ (see Figures \ref{histX}. and \ref{histH}.).

\begin{figure}[!h]
\label{histX}
\includegraphics[width = 12cm]{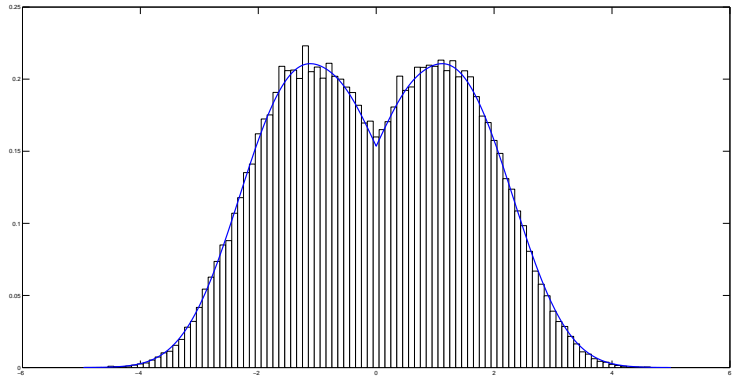}
\caption{The histogram represents a numerical simulation of the TSRM at time one $X(1)$ with a sample of $10^5$ independent realizations. Each realization is computed using the discrete ``toy-model'' introduced in \cite{toth_werner_98} with a step of $2\cdot 10^{-6}$. The plain blue line is the graph of the density of $X(1)$.}
\end{figure}
\begin{figure}[!h]
\label{histH}
\includegraphics[width = 12cm]{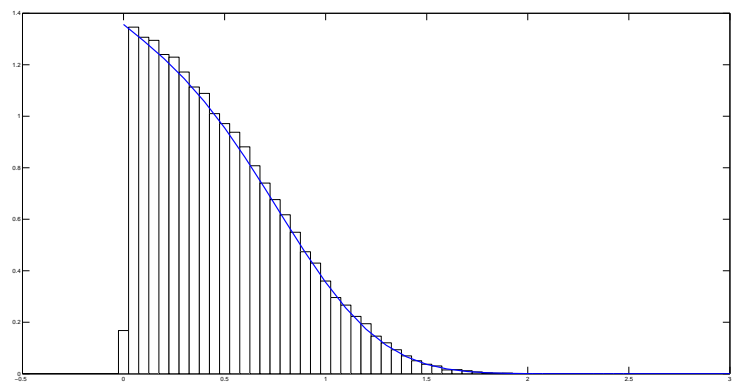}
\caption{The histogram represents a numerical simulation of the height process at time one $H(1)$ (sample of $10^5$ independent realizations using discrete ``toy-model'' with a step of $2\cdot 10^{-6}$). The plain blue line is the graph of the density of $H(1)$.}
\end{figure}

The moments of the marginal distributions collected in Theorem \ref{thm:main} are as follows
\begin{align*}
\expect{H(1)^n}
&
=
\int_0^\infty h^n\nu_2(h) dh
=
\Gamma(5/6)
\frac{(2\cdot 3^{1/3})^{-n}\,  n!}{\Gamma(n/3+1)\Gamma(n/3+5/6)}
\\[6pt]
\expect{\abs{X(1)}^{n}}
&
=
2\int_{0}^\infty x^{n} \hat\nu_1(x) dx
=
\sum_{k=1}^{\infty} p_k \frac{n!}{(\delta_k')^n \Gamma(2n/3+1)}.
\end{align*}

As a direct consequence of \eqref{nu2} and \eqref{nu1} we also obtain the precise tail asymptotics of $\nu_2(h)$ and $\nu_1(x)$ for $h\gg1$, respectively, for $\abs{x}\gg1$:

\begin{corollary}
\label{cor:tail_esrimates}
\begin{align}
\notag
&
\lim_{h\to\infty} h^{-3} \log\prob{H(1)>h} = -\frac{8}{9},
\\[6pt]
\label{Xtail}
&
\lim_{x\to\infty} x^{-3} \log\prob{X(1)>x} = -\frac{4}{27} (\delta'_1)^3.
\end{align}
\end{corollary}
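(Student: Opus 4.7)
The plan is to extract both limits from the explicit density formulas \eqref{nu2} and \eqref{nu1} by inserting the classical large-argument asymptotics of the special functions appearing there, and then applying a standard Laplace/Watson-type estimate to the resulting tail integrals.

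For the first limit, I would invoke the well-known asymptotic $U(a,b;z) = z^{-a}(1+\cO(z^{-1}))$ as $z\to\infty$, which is immediate from the integral representation \eqref{Tricomifnc}. Substituting $a=1/6$, $b=2/3$, $z=8h^3/9$ into \eqref{nu2} yields
\begin{equation*}
\nu_2(h) = C_1\, h^{-1/2} e^{-8h^3/9}\bigl(1+o(1)\bigr), \qquad h\to\infty,
\end{equation*}
for an explicit $C_1>0$. A single integration by parts (equivalently, the substitution $u=8t^3/9$) applied to $\prob{H(1)>h}=\int_h^\infty \nu_2(t)\,dt$ produces $\prob{H(1)>h} \asymp h^{-5/2}e^{-8h^3/9}$, and dividing the logarithm by $h^3$ gives the first limit.

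For the second limit, I would use the saddle-point asymptotic of the Mittag-Leffler density of index $\alpha\in(0,1)$:
\begin{equation*}
f_\alpha(y) \sim C_\alpha\, y^{(\alpha-1/2)/(1-\alpha)} \exp\!\Bigl(-(1-\alpha)\alpha^{\alpha/(1-\alpha)} y^{1/(1-\alpha)}\Bigr), \qquad y\to\infty,
\end{equation*}
which for $\alpha=2/3$ specializes to $f_{2/3}(y) \sim C_2\, y^{1/2} e^{-(4/27)y^3}$. With $y=\delta'_k x$, the $k$-th term of the series \eqref{nu1} then decays like $\exp(-(4/27)(\delta'_k)^3 x^3)$, and since $0<\delta'_1<\delta'_2<\cdots$ the $k=1$ term strictly dominates for $x$ large. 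The same Laplace estimate as above applied to $\int_x^\infty \nu_1(t)\,dt$ then produces $\log\prob{X(1)>x} = -(4/27)(\delta'_1)^3 x^3 + \cO(\log x)$, which is \eqref{Xtail}.

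The main (minor) obstacle is passing from the pointwise asymptotic of $f_{2/3}$ to quantitative control on the whole series \eqref{nu1}. This requires a uniform upper bound of the form $f_{2/3}(y) \le C(1+y)^{1/2} e^{-(4/27)y^3}$ for all $y\ge 0$ — produced either by the saddle-point derivation itself or from the Laplace-transform representation — combined with $\sum_k p_k = 1$, so that for $x$ large the remainder $\sum_{k\ge 2}$ is bounded by $\exp(-(4/27)((\delta'_2)^3-(\delta'_1)^3)x^3)$ times the leading $k=1$ term and can be absorbed into the $o(1)$. Once this uniform bound is in hand, the remaining steps are routine.
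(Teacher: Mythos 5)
Your argument is correct and is exactly the "direct consequence" the paper has in mind; since the paper does not spell out a proof of the Corollary beyond the phrase preceding it, your write-up supplies the missing details in the natural way. Two small checks worth recording: first, $U(a,b;z)\sim z^{-a}$ does follow from \eqref{Tricomifnc} via Watson's lemma, giving $\nu_2(h)\sim C\,h^{-1/2}e^{-8h^3/9}$; second, the asymptotic $f_{2/3}(y)\sim C\,y^{1/2}e^{-(4/27)y^3}$ is most cleanly read off from the paper's own identity \eqref{ML2/3asTricomifunction} together with the same $U(1/6,4/3;z)\sim z^{-1/6}$ estimate, rather than quoting the general saddle-point formula — this also produces for free the uniform bound $f_{2/3}(y)\le C(1+y)^{1/2}e^{-(4/27)y^3}$ that you correctly flag as needed. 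With $p_k=\tfrac{u'(0)^2}{2}(\delta'_k)^{-4}$ and $\delta'_k\sim\tfrac12(3\pi k)^{2/3}$, the tail sum $\sum_{k\ge2}p_k\,\tfrac{\delta'_k}{2}(1+\delta'_kx)^{1/2}e^{-(4/27)(\delta'_k)^3x^3}$ is then bounded by a constant times $x^{1/2}e^{-(4/27)(\delta'_2)^3x^3}$, which is indeed negligible against the $k=1$ term, so the dominance argument closes.
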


\noindent In view of \eqref{xstatsimx} the tail asymptotics \eqref{Xtail} also implies
\begin{align*}
\lim_{x\to\infty} x^{-3} \log\prob{X_{\mathrm{st}}(1)>x} = -\frac{8}{27} (\delta'_1)^3.
\end{align*}
Note that this is matching with the large deviation results of \cite{dumaz_11}. More precision in the asymptotics can be derived from the expression of the densities.

\bigskip

The structure of further parts of this note is as follows: In section \ref{s:proofs} we present the proofs of the main statements. Section \ref{s:appendix} is an Appendix which contains classical ingredients: the Feynman-Kac formulas used, and collections of facts about Airy function, Mittag-Leffler distributions and confluent hypergeometric functions. We do not prove the statements collected in the Appendix but give precise reference for all.

\section{Proofs}
\label{s:proofs}

Firstly, we make a brief outline of the steps of the proof. In Section \ref{ss:preliminaries}, we factorize the density of $(X_{\theta_s},H_{\theta_s})$, i.e. $\hat \varrho(1;x,h) = \hat \nu(x,h)$ as $\hat \nu(x,h) = u(h) \varphi(x,h)$. The functions $u$ and $\varphi$ (and thus $\hat \nu$) solve differential equations we can find thanks to Feynman-Kac formula (content of Proposition 1.1). Integrating $\hat \nu(x,h)$ with respect to $x$ permits to derive results for the second marginal $\hat \nu_2$ (the density of $H_{\theta_1}$) and this is the object of Section \ref{ss:hatnu2}. Using the scaling properties and performing an inverse Laplace transform lead to Formula \eqref{nu2} for $\nu_2$ (the density of $H_1$), this is done in Section \ref{ss:nu2}. Similarly, we study the first marginal $\hat \nu_1$ in Section \ref{ss:hatnu1} and $\nu_1$ in Section \ref{ss:nu1}.

\subsection{Preliminaries}
\label{ss:preliminaries}

Recall the definitions of $T_x$, $\omega_x$ and $\omega'$ ($x\ge 0$) in Section \ref{ss:review}, \eqref{defTx}, \eqref{defwx}, and \eqref{defw'}. We write
\[
T_{x}=S + T^{1}_{x} + T^{2}_{x},
\]
where
\begin{align*}
S:=
\int_{\omega'}^0 \abs{B(y)}dy,
\qquad\qquad
T^{1}_{x}:=
\int_0^{x} \abs{B(y)} dy,
\qquad\qquad
T^{2}_{x}:=
\int_{x}^{\omega_x} \abs{B(y)} dy,
\end{align*}
(see Picture \ref{reprTx}.)

\begin{figure}[!h]
\label{reprTx}
\includegraphics[width = 10cm]{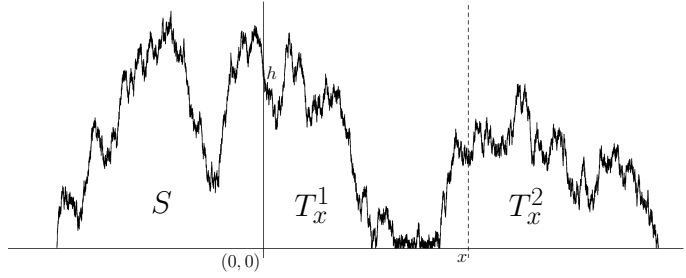}
\caption{Representation of $T_{x} = S + T^{1}_{x} + T^{2}_{x}$}
\end{figure}

The main ingredients of our investigations are the following three functions:
\begin{align}
\label{udef}
&
u(h)
:=
\condexpect{e^{-S}}{B(0)=h},
&&
h\ge0,
\\[6pt]
\label{phidef}
&
\varphi(x,h)
:=
\condexpect{e^{-T^{1}_{x} - T^{2}_{x}}}{B(0)=h},
&&
x\ge0, \,\,h\ge0,
\\[6pt]
\notag
&
w(x)
:= \varphi(x,0) =
\condexpect{e^{-T^{1}_{x} - T^{2}_{x}}}{B(0)=0}
&&
x\ge0.
\end{align}
We also define the Laplace transforms (in the variable $x\ge0$).
\begin{align}
\label{varphitilde}
\tilde\varphi(\lambda,h)
&:=
\int_0^\infty e^{-\lambda x} \varphi(x,h) dx,
\\[6pt]
\notag
\tilde w(\lambda)
&:=
\int_0^\infty e^{-\lambda x} w(x) dx.
\end{align}

\begin{remark}
 Note that this is a different type of Laplace transform than the one we used for the time variable. That is why we use different notations (hat for the time transform, tilde for this last one).
\end{remark}

In the following Proposition we collect the basic starting identities. All these follow from straightforward applications of "infinitesimal conditioning" and/or Feynman-Kac formula. By "infinitesimal conditioning" we mean using (strong) Markov property and infinitesimal generator.

\begin{proposition}
\label{prop:prelim}
(i)
The function $u:\R_+\to[0,1]$ defined in \eqref{udef} is the unique bounded solution of the boundary value problem
\begin{align}
\label{udebc}
\ph^2u(h)=2hu(h),
\qquad
u(0)=1.
\end{align}
That is: It is exactly the normalized Airy function defined and treated in subsection \ref{ss:appAiry}, restricted to $h\in[0,\infty)$.
\\[6pt]
(ii)
The function $\varphi:\R_+\times\R_+\to[0,1]$ defined in \eqref{phidef} is the unique bounded solution of the parabolic Cauchy problem
\begin{align}
\label{phipde}
\px \varphi(x,h)
=
\frac12\ph^2 \varphi(x,h) - h \varphi (x,h),
\end{align}
with initial and boundary conditions
\begin{align}
\label{phiicbc}
\mathrm{IC:}
\quad\varphi(0,h)=u(h),
\qquad
\mathrm{BC:}
\quad
\partial_h\varphi(x,0)=0, \,\,\,\text{ for } x>0.
\end{align}
(iii)
The function $\hat \nu:\R_+\times\R_+\to[0,1]$ is the unique bounded solution of the parabolic Cauchy problem
\begin{align}
\label{nuhatpde}
\px\hat\nu(x,h)
=
\frac12\ph\left( u(h)^2 \ph \left( u(h)^{-2}\hat\nu(x,h)\right)\right),
\end{align}
with initial and boundary conditions
\begin{align}
\label{nuhaticbc}
\mathrm{IC:}
\quad\hat\nu(0,h)=u(h)^2,
\qquad
\mathrm{BC:}
\quad
\hat\nu(x,0)=w(x)\,\,\, \text{ for } x>0.
\end{align}

\end{proposition}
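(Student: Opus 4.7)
The three parts can be reduced to the basic toolkit of Brownian motion (strong Markov property, time reversal) followed by Feynman--Kac; part (iii) then falls out from a direct calculation combining the previous two identities.

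\textbf{Part (i).} Since $\omega'$ is the last zero of $B$ before time $0$ and $B(0)=h>0$, the process $B$ stays positive on $(\omega',0]$, so $S=\int_{\omega'}^{0} B(y)\,dy$. Time-reversing $B$ around $0$ turns $(\omega',0]$ into $[0,\tau)$, where $\tau$ is the first hitting time of $0$ by a forward Brownian motion $\tilde B$ starting at $h$, and $\int_{\omega'}^0 B(y)\,dy=\int_{0}^{\tau}\tilde B(y)\,dy$. Hence
\[
u(h)=\mathbf{E}_h\!\left[\exp\!\Big(-\int_0^{\tau} B(y)\,dy\Big)\right].
\]
By the Feynman--Kac formula with potential $V(x)=x$, killing at $0$ and boundary value $1$ there, the function $u$ solves $\tfrac12 u''(h)-h\,u(h)=0$ on $(0,\infty)$ with $u(0)=1$, i.e.\ $\ph^2 u=2hu$. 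The two linearly independent solutions of this rescaled Airy equation are $\mathrm{Ai}$-type (decaying) and $\mathrm{Bi}$-type (exploding); boundedness together with $u(0)=1$ pins down the normalized Airy function described in \ref{ss:appAiry}.

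\textbf{Part (ii).} Apply the strong Markov property at time $x$: conditional on $\cF_x$, the functional $T^2_x=\int_x^{\omega_x}|B(y)|\,dy$ depends only on the future and has the same law as $S$ computed from a Brownian motion started at $|B(x)|$. Therefore
\[
\varphi(x,h)=\mathbf{E}_h\!\left[\exp\!\Big(-\int_0^x |B(y)|\,dy\Big)\,u(|B(x)|)\right].
\]
This is a Feynman--Kac representation with potential $V(y)=|y|$ and terminal data $u(|\cdot|)$, so $\varphi(x,h)$ solves the parabolic equation $\px\varphi=\tfrac12 \ph^2\varphi-|h|\varphi$ on $\R_+\times\R$ with initial datum $u(|h|)$. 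Since both the equation and the data are invariant under $h\mapsto -h$, $\varphi$ is even in $h$; restricting to $h\ge0$ we get $|h|=h$ and the Neumann condition $\ph\varphi(x,0)=0$. Boundedness (by $1$) and standard uniqueness for the parabolic Cauchy problem with reflecting boundary give uniqueness.

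\textbf{Part (iii).} Conditioning on $B(0)=h$ splits the Brownian motion into two independent pieces (past and future); $S$ is measurable with respect to the past and $T^1_x+T^2_x$ with respect to the future, so
\[
\hat\nu(x,h)=\mathbf{E}_h\!\left[e^{-S-T^1_x-T^2_x}\right]=u(h)\,\varphi(x,h).
\]
The PDE \eqref{nuhatpde} then follows from a direct differentiation: writing $u^{-2}\hat\nu=\varphi/u$, one checks
\[
\ph\!\left(u^2\ph(u^{-2}\hat\nu)\right)=\ph\!\left(u\,\ph\varphi-\varphi\,u'\right)=u\,\ph^2\varphi-\varphi\,u'',
\]
and substituting $u''=2hu$ and $\tfrac12\ph^2\varphi=\px\varphi+h\varphi$ collapses the right-hand side to $2u\,\px\varphi=2\,\px\hat\nu$. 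The initial condition $\hat\nu(0,h)=u(h)\varphi(0,h)=u(h)^2$ and the boundary condition $\hat\nu(x,0)=u(0)w(x)=w(x)$ are immediate from the factorization. Uniqueness in the class of bounded solutions is again standard.

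The only genuine subtlety in this program is the justification of the Feynman--Kac representations on an unbounded spatial domain with the \emph{linear} potential $h$ (or $|h|$); this is handled by the usual localization plus a dominated-convergence argument using the strict positivity and integrability of $u$ shown in \ref{ss:appAiry}. Everything else is bookkeeping.
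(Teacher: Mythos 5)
Your proposal is correct and follows essentially the same route as the paper: strong Markov at $x$ plus Feynman--Kac for part (ii), the factorization $\hat\nu=u\varphi$ combined with \eqref{udebc} and \eqref{phipde} for part (iii), and for part (i) you supply (via time reversal and elliptic Feynman--Kac) the argument that the paper instead delegates to a citation of Borodin--Salminen formula 2.8.1. The explicit product-rule computation you give for \eqref{nuhatpde} is exactly the ``use \eqref{udebc}, \eqref{phipde}, \eqref{phiicbc}'' step the paper leaves to the reader, and it checks out.
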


\begin{proof}[Proof of Proposition \ref{prop:prelim}] \

\noindent
(i)
\eqref{udebc} is classical see for instance formula 2.8.1
p.~167 in \cite{borodin_salminen_02}. It follows from usual ``infinitesimal conditioning''.

\noindent
(ii)
By strong Markov property of the Brownian motion
\begin{align*}
 \varphi(x,h)
=
\condexpect{e^{-\int_0^x \abs{B(y)} dy} u(\abs{B(x)})}{B(0)=h},
\end{align*}
and thus, we apply the Feynman-Kac formula from  subsection \ref{ss:appFK}, with $V(h) = \abs{h}$ and $f(h)=u(\abs{h})$. Since for all $x>0$, $h\mapsto\varphi(x,h)$ is even and smooth, the boundary condition in \eqref{phiicbc} follows.

\noindent
(iii)
Note that
\begin{align}
\label{nu=utimesphi}
\hat \nu(x,h)=u(h)\varphi(x,h),
\end{align}
and use \eqref{udebc},\eqref{phipde} and  \eqref{phiicbc}.

\end{proof}

\subsection{Computation of $\hat\nu_2$}
\label{ss:hatnu2}

Since
\begin{align*}
\hat\nu_2(h)=2\int_0^\infty \hat\nu(x,h) dx,
\end{align*}
from \eqref{nuhatpde} and \eqref{nuhaticbc} we readily get
\begin{align}
\label{nu2hatde}
\big( u(h)^2\big( u(h)^{-2}\hat\nu_2(h)\big)'\big)'
=-4u(h)^2.
\end{align}
Using the following identities (that directly come from integration by parts and the differential equation of $u$):
\begin{align*}
& \int_0^h u(\chi)^2 d\chi = h u(h)^2 - u'(h)^2/2 + u'(0)^2/2,\\
& \int_0^h u(\chi)^{-2} u'(\chi)^2 d\chi = -u'(h)/u(h) + u'(0) + h^2,
\end{align*}
we deduce that the general solution of \eqref{nu2hatde} is
\begin{align}
\label{gensolhatnu2}
 -2u(h)u'(h) + C_1 u(h)^2 + C_2 u(h)^2 \int_0^h u(\chi)^{-2} d\chi.
\end{align}
From the asymptotics \eqref{airyasymp} it follows that
\[
u(h)^2 \int_0^h u(\chi)^{-2} d\chi \asymp h^{-1/2},
\quad\text{ as } h\to\infty.
\]
Since, by \eqref{intnu},
\begin{align}
\label{hatnu2norm}
\int_0^\infty \hat\nu_2 (h) dh =1,
\end{align}
we must choose $C_2=0$. (Otherwise $\nu_2$ wouldn't be integrable.) Finally, due to \eqref{hatnu2norm} again and the fact that $-2u u'$ already integrates to $1$, we have $C_1=0$, too. The proof of \eqref{nu2hat} is completed.

\begin{remark}
 The interchange between integration and differentiation used to obtain \eqref{nu2hatde} is standard. One can justify it properly by taking the integral over a compact interval of the type $[0,x]$ first and deduce an expression for this function of a similar kind of \eqref{gensolhatnu2}. Taking the limit $x \to \infty$ gives the result.
\end{remark}

\subsection{Computation of $\nu_2$}
\label{ss:nu2}

Due to \eqref{rholap} and \eqref{rhoscale} $\nu_2(h)$ and $\hat \nu_2(h)$ are related by
\begin{align}
\label{nu2trafo}
\hat\nu_2(h) = \int_0^\infty e^{-t} t^{-1/3} \nu_2(t^{-1/3} h) dt.
\end{align}
The goal of the present subsection is inverting this integral transform.

By straightforward computations -- performing a change of variables and an integration by parts -- form \eqref{nu2hat} and \eqref{nu2trafo} we obtain
\begin{align*}
u^2(z^{1/3})
=
\frac13\int_0^\infty e^{-sz} \nu_2(s^{-1/3}) s^{-4/3} ds,
\end{align*}
and hence
\begin{align}
\label{nuinterm}
\nu_2(h) = 3 h^{-4} (f*f) (h^{-3}),
\end{align}
where $f$ is the inverse Laplace transform of the function $z\mapsto u(z^{1/3})$:
\begin{align*}
u(z^{1/3}) = \int_0^\infty e^{-h s}f(s) ds,
\end{align*}
and $*$ stands for convolution on $[0, \infty)$.

In the following computations we denote generically by $C$ a multiplicative constant. The value of this may change from line to line.  These constants could be written explicitly in each step, but this wouldn't be much illuminating.  The value of the norming constant in \eqref{nu2} will be identified at the very end of the computations.

\noindent The function $f$ is explicitly known, see \cite{kearney_majumdar_05}:
\begin{align*}
f(s)
=
C s^{-4/3} e^{-2/(9s)}.
\end{align*}
Thus,
\begin{align}
\notag
f*f(s)
&=
C \int_0^s \big(t(s-t)\big)^{-4/3} e^{-(2s)/(9t(s-t))} dt
\\[6pt]
\notag
&=
C s^{-5/3} \int_0^1 \big(t(1-t)\big)^{-4/3} e^{-2/(9s t(1-t))} dt
\\[6pt]
\notag
&=
C s^{-5/3} e^{-8/(9s)}\int_0^\infty (1+z)^{-1/6} z^{-1/2} e^{-8z/(9s)} dz
\\[6pt]
\notag
&=
C s^{-5/3} e^{-8/(9s)} U(1/2, 4/3; 8/(9s)).
\\[6pt]
\label{f*f}
&=
C s^{-4/3} e^{-8/(9s)} U(1/6, 2/3; 8/(9s)).
\end{align}
In the third line we perform the change of integration variable $(t(1-t))^{-1}=: 4(1+z)$. Finally, in the last step we apply Kummer's identity \eqref{Kummer's_identity}.

From \eqref{nuinterm} and \eqref{f*f} we readily get
\begin{align*}
\nu_2(h)
=
C  e^{-(8h^3)/9} U(1/6, 2/3; (8h^3)/9).
\end{align*}
\noindent Identification of the norming constant $C$ in the last expression follows from \eqref{nu2hat}, \eqref{uprime0} and \eqref{nu2trafo}:
\begin{align*}
2^{4/3} 3^{1/3} \Gamma(2/3)\Gamma(1/3)^{-1}
&=
\hat\nu_2(0)
=
\Gamma(2/3)\nu_2(0)
\\[6pt]
&
=
C \Gamma(2/3)
U(1/6, 2/3; 0)
=
C
\Gamma(2/3) \Gamma(1/3) /\Gamma(1/2).
\end{align*}
\noindent Hence \eqref{nu2}.

\subsection{Computation of $\hat\nu_1$}
\label{ss:hatnu1}

Recall the definition of $\hat \nu_1$:
\begin{align*}
\hat\nu_1(x)=\int_0^\infty \hat\nu(x,h) dh.
\end{align*}
\noindent 
We examine the case $x >0$ (the function $\hat \nu_1$ is even). Using \eqref{nu=utimesphi} we readily obtain
\begin{align}
\label{pxnuhat1}
\px\hat\nu_1(x)
&= \frac12 \int_0^{\infty} \ph \left(u(\chi) \ph \varphi(x,\chi) - u'(\chi) \varphi(x,\chi)\right) d\chi \notag\\
&= \frac12 \left(u'(0) w(x) - u(0)\ph\varphi(x,0)\right).
\end{align}

\begin{remark}
 Again, the interchange between integration and differentiation is classical. Indeed, for every $x \ge 0$, $\ph \varphi(x,h) \to 0$ when $h \to \infty$. This is the case because on one hand $\ph \varphi(x,h)$ admits a finite or infinite limit when $h \to \infty$ (integrating \eqref{phipde} with respect to $h$, one can see that the terms involved admit finite or infinite limits -- recall $\px \varphi(x,h)$ is negative) and on the other hand, its integral is finite. One can then integrate $\px \hat\nu(x,h)$ first on $[0,h]$ and obtain an expression for $\int_0^h u(\chi)\varphi(x,\chi) d\chi$. Taking $h \to \infty$ and using $\lim_{h \to \infty}\ph \varphi(x,h) = 0$ rigorously prove \eqref{pxnuhat1}.
\end{remark}

Notice that we have $\ph\varphi(x,0) = 0$ for every $x$ (see \eqref{phiicbc}). The following lemma will serve to compute $w(x)$ on the right hand side of \eqref{pxnuhat1}.

\begin{lemma}
\label{lem:tildephi}
The function $\tilde\varphi:\R_+\times\R_+\to\R$ defined in \eqref{varphitilde} is expressed as
\begin{align}
\label{varphitildeexpr}
\tilde\varphi(\lambda, h)
=
\frac{-1}{u(\lambda)u'(\lambda)}
\Big\{
\,\,\,
&
u(\lambda+h) \int_0^\infty u(\lambda+\chi) u(\chi) d\chi
\\[6pt]
\notag
&
+ u(\lambda+h) \int_0^h v_\lambda(\lambda+\chi) u(\chi) d\chi
\\[6pt]
\notag
&
+v_\lambda(\lambda+h) \int_h^\infty u(\lambda+\chi) u(\chi) d\chi
\,\,\,
\Big\},
\end{align}
where $v_\lambda:\R\to\R$ is the solution of the Airy equation \eqref{airyeq} with initial conditions
\begin{align}
\label{vbc}
v_\lambda(\lambda) = u(\lambda),
\qquad
v'_\lambda(\lambda) = -u'(\lambda).
\end{align}
\end{lemma}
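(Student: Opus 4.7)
The plan is to take the Laplace transform in $x$ of the parabolic Cauchy problem \eqref{phipde}--\eqref{phiicbc}. Multiplying \eqref{phipde} by $e^{-\lambda x}$, integrating $x$ from $0$ to $\infty$ and using $\varphi(0,h)=u(h)$ gives the inhomogeneous second-order ODE in $h$
\begin{equation*}
\partial_h^2\tilde\varphi(\lambda,h) - 2(h+\lambda)\tilde\varphi(\lambda,h) = -2u(h),
\end{equation*}
with the Neumann condition $\partial_h\tilde\varphi(\lambda,0)=0$ (inherited from the BC in \eqref{phiicbc}) and the requirement that $\tilde\varphi(\lambda,\cdot)$ be bounded as $h\to\infty$ (since $0\le\varphi\le 1$).

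The associated homogeneous equation $\psi''(h)=2(h+\lambda)\psi(h)$ is a shifted Airy equation, so $\psi_1(h):=u(h+\lambda)$ and $\psi_2(h):=v_\lambda(h+\lambda)$ are two solutions. The initial data \eqref{vbc} are chosen precisely so that the (constant) Wronskian
\begin{equation*}
u(y)v_\lambda'(y)-u'(y)v_\lambda(y)\Big|_{y=\lambda} = -2u(\lambda)u'(\lambda)
\end{equation*}
is nonzero, guaranteeing independence; in particular $\psi_2$ carries a $\Bi$-type component and thus grows super-exponentially at $+\infty$. Variation of parameters then delivers the particular solution
\begin{equation*}
\tilde\varphi_p(h) = \frac{-1}{u(\lambda)u'(\lambda)}\left[u(h+\lambda)\int_0^h v_\lambda(\chi+\lambda)u(\chi)d\chi + v_\lambda(h+\lambda)\int_h^\infty u(\chi+\lambda)u(\chi)d\chi\right],
\end{equation*}
the integration limits $0$ and $\infty$ being arranged so that $\tilde\varphi_p(h)\to 0$ as $h\to\infty$. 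A direct differentiation, using $u''(y)=2yu(y)$, $v_\lambda''(y)=2yv_\lambda(y)$ and the Wronskian identity above, confirms the ODE.

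The general solution differs from $\tilde\varphi_p$ by a homogeneous piece $A\psi_1+B\psi_2$; boundedness at $h=\infty$ forces $B=0$, and the Neumann condition at $h=0$, combined with $v_\lambda'(\lambda)=-u'(\lambda)$, pins down $A=-[u(\lambda)u'(\lambda)]^{-1}\int_0^\infty u(\chi+\lambda)u(\chi)d\chi$. Substituting back produces exactly \eqref{varphitildeexpr}. The main technical hurdle will be the asymptotic analysis at $h\to\infty$: invoking the precise Airy asymptotics of subsection~\ref{ss:appAiry}, one must check convergence of $\int_0^\infty u(\chi+\lambda)u(\chi)d\chi$, verify $\tilde\varphi_p(h)\to 0$ (the $\Bi$-type growth of $v_\lambda(h+\lambda)$ is exactly compensated by the tail decay of $\int_h^\infty u(\chi+\lambda)u(\chi)d\chi$, while the first integral's slow growth is dominated by the super-exponential decay of $u(h+\lambda)$), and thereby rule out the $v_\lambda(h+\lambda)$ mode from any bounded solution.
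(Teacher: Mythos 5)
Your proof is correct and arrives at the same Green's-function representation the paper uses; the only difference is in packaging. The paper cites its Feynman--Kac formula \eqref{FKlamsln} on the full line with the even potential $V(h)=|h|$ and data $f(h)=u(|h|)$, identifying $\phi_\pm$ by gluing $u(\lambda+\cdot)$ and $v_\lambda(\lambda-\cdot)$ across $h=0$ (the conditions \eqref{vbc} being exactly the $C^1$ matching there); you instead Laplace-transform the half-line problem \eqref{phipde}--\eqref{phiicbc} directly and solve the resulting inhomogeneous shifted Airy ODE on $[0,\infty)$ with the Neumann condition at $0$ and boundedness at $\infty$ --- the two formulations are equivalent by the reflection symmetry, and your Wronskian, variation-of-parameters, and mode-elimination steps all check out.
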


\begin{proof}[Proof of Lemma \ref{lem:tildephi}.] \
Note first that $\varphi(x,h)$ is exactly of the form given on the right hand side of \eqref{FKg}, with the choice $V(z)=\abs{z}$ and $f(z)=u(\abs{z})$. Thus, $\tilde\varphi(\lambda, h)$ is expressed as \eqref{FKlamsln} with this choice of $V(\cdot)$ and $f(\cdot)$.

Let $v_\lambda:\R\to\R$ be the solution of \eqref{airyeq} with boundary conditions \eqref{vbc} and
\begin{align*}
&
\phi_+(\lambda, h)
:=
u(\lambda+h)\ind{h\ge0} + v_\lambda (\lambda-h) \ind{h< 0},
\\[6pt]
&
\phi_-(\lambda, h)
:=
\phi_+(\lambda, -h).
\end{align*}
Then $\phi_\pm(\lambda, \cdot)$ are exactly the unique solutions of \eqref{FKlamaux}, with $V(h)=\abs{h}$, such that  $\lim_{h\to\pm\infty}\phi_{\pm}(\lambda,h)=0$.
Applying \eqref{FKlamsln} with $f(h) = u(\abs{h})$ and $V(h)=\abs{h}$ we readily obtain \eqref{varphitildeexpr}.
\end{proof}

\noindent Using now \eqref{varphitildeexpr},  we obtain
\begin{align}
\notag
\tilde w(\lambda)
=
\tilde \varphi (\lambda,0)
&
=
\frac{-2}{u'(\lambda)}
\int_0^\infty u(\lambda+\chi) u(\chi) d\chi
\\[6pt]
\notag
&
=
\frac{-1}{\lambda u'(\lambda)}
\int_0^\infty \big(u''(\lambda+\chi) u(\chi) - u(\lambda+\chi) u''(\chi)  \big)d\chi
\\[6pt]
&
=
\frac{-1}{\lambda u'(\lambda)}
\int_0^\infty \big(u'(\lambda+\chi) u(\chi) - u(\lambda+\chi) u'(\chi)  \big)'d\chi \notag
\\[6pt]
\label{wtilde}
&
=
\frac{u'(\lambda) - u'(0)u(\lambda)}{\lambda u'(\lambda)}.
\end{align}
\noindent In the first step we have used \eqref{varphitildeexpr}, in the second step the Airy equation \eqref{airyeq}.

\medskip

\noindent Now, using the key identity \eqref{keyidentity} and the trace formula \eqref{traceformulas}, from \eqref{wtilde} we derive
\begin{align*}
\tilde w(\lambda)
=
\frac{\abs{u'(0)}}{2}\sum_{k=1}^\infty \left(\delta'_k\right)^{-2} \frac{1}{\delta'_k+\lambda},
\end{align*}
and hence
\begin{align}
\label{w(x)}
w(x)=\frac{\abs{u'(0)}}{2}\sum_{k=1}^\infty \left(\delta'_k\right)^{-2} e^{-\delta'_k x}.
\end{align}
Finally, \eqref{pxnuhat1}, \eqref{nuhaticbc} and \eqref{w(x)} yield \eqref{nu1hat}.

\subsection{Computation of $\nu_1$}
\label{ss:nu1}

Due to \eqref{rholap} and \eqref{rhoscale} $\nu_1(x)$ and $\hat \nu_1(x)$ are related by
\begin{align*}
\hat\nu_1(x) = \int_0^\infty e^{-t} t^{-2/3} \nu_1(t^{-2/3} x) dt.
\end{align*}
The goal of the present subsection is inverting this integral transform. Since $\hat\nu_1(x)$ is convex combination of scaled exponentials, we only have to compute the inverse transform of the exponential density. Assume
\begin{align*}
e^{-x} = \int_0^\infty e^{-t} t^{-2/3} \phi(t^{-2/3} x) dt,
\qquad x\ge0.
\end{align*}
Writing the integer moments of both sides, by elementary computations we obtain
\begin{align*}
\int_0^\infty \phi(x) x^n dx = \frac {n!}{\Gamma(2n/3 + 1)}.
\end{align*}
This identifies $\phi$ as the Mittag-Leffler density $f_{2/3}$ from \eqref{MLdensity} and \eqref{ML2/3asTricomifunction} thanks to \eqref{MLmoments}. Hence \eqref{nu1}.

\section{Appendix:}
\label{s:appendix}

\subsection{Feynman-Kac formulas}
\label{ss:appFK}

According to the present context we formulate the Feynman-Kac formulas only for the one-dimensional Brownian motion. For the general theory of Feynman-Kac formulas see e.g. \cite{kac_49}, \cite{karatzas_shreve_91}, or \cite{simon_05}.

Let $y\mapsto B(y)$ be a standard 1-dimensional Brownian motion which starts from level $B(0)=h\in\R$, and $V:\R\to\R_+$ and $f:\R\to\R$ be continuous functions. Assume that $f$ is also bounded. Define
\begin{align}
\label{FKg}
&
g(x,h):=
\condexpect{e^{-\int_0^x V(B(y)) dy} f(B(x))}{B(0)=h},
&&
x\ge0,
\\[6pt]
\notag
&
\tilde g(\lambda,h):= \int_0^\infty e^{-\lambda x} g(x,h) dx,
&&
\lambda>0.
\end{align}

\begin{theorem*}
[The Feynman-Kac formula for 1-d Brownian motion] \
\\[6pt]
(i)
$(x,h)\mapsto g(x,h)$ is the unique bounded solution of the parabolic Cauchy problem
\begin{align}
\notag
\px g(x,h) = \frac12 \ph^2 g(x,h) -V(h) g(x,h),
\qquad
g(0,h) = f(h).
\end{align}
(ii)
For $\lambda>0$ fixed, $h\mapsto \tilde g(\lambda,h)$ is the unique bounded solution of the elliptic PDE
\begin{align}
\label{FKlam}
\frac12 \ph^2 \tilde g(\lambda,h) =
\big(V(h)+\lambda\big)\tilde g(\lambda,h)  - f(h).
\end{align}
(iii)
Fix $\lambda>0$ and let $\phi_\pm(\lambda, \cdot):\R\to\R$ be the unique solutions of the ODE
\begin{align}
\label{FKlamaux}
\frac12 \ph^2 \phi(h) =
\big(V(h)+\lambda\big)\phi(h),
\end{align}
for which $\lim_{h\to\pm\infty}\phi_{\pm}(\lambda, h)=0$. (The convergence is actually at least exponentially fast.) Then the unique bounded solution of \eqref{FKlam} is expressed as:
\begin{align}
\label{FKlamsln}
\tilde g(\lambda,h)
=&
2\left(
\phi_+(\lambda,0)\phi'_-(\lambda, 0) -
\phi_-(\lambda, 0)\phi'_+(\lambda, 0)
\right)^{-1}
\\[6pt]
&
\notag
\hskip1cm
\times
\left\{
\phi_+(\lambda, h)\int_{-\infty}^h\phi_-(\lambda, \chi)f(\chi)d\chi
-
\phi_-(\lambda, h)\int_h^{\infty}\phi_+(\lambda, \chi)f(\chi)d\chi
\right\}.
\end{align}
\end{theorem*}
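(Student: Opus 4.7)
The plan is to prove the three parts of the theorem in order, relying on the standard interplay between Brownian motion, It\^o calculus, and linear parabolic/elliptic PDE. For part (i), I would verify directly that the stochastic expression $g(x,h)$ defined in \eqref{FKg} is $C^{1,2}$ in $(x,h)$ and solves the stated PDE. Smoothness in $h$ comes from the Gaussian smoothing of the Brownian transition density together with the boundedness of $f$ and nonnegativity of $V$. The PDE itself is obtained by conditioning at an infinitesimal time $\epsilon$: by the strong Markov property,
\[
g(x+\epsilon,h) = \condexpect{e^{-\int_0^\epsilon V(B(y))dy}\,g(x,B(\epsilon))}{B(0)=h},
\]
and expanding the exponential to first order in $\epsilon$ while using $B(\epsilon)-h\sim\mathcal{N}(0,\epsilon)$ yields $\px g = \tfrac12\ph^2 g - V g$ in the limit. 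Uniqueness of bounded solutions follows by running this argument in reverse: for any bounded $C^{1,2}$ solution $\tilde g$, the process $s\mapsto \exp(-\int_0^s V(B(y))dy)\,\tilde g(x-s,B(s))$, $s\in[0,x]$, is a bounded martingale by It\^o's formula combined with the PDE, and equating expectations at $s=0$ and $s=x$ identifies $\tilde g(x,h)=g(x,h)$.

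For part (ii), I would Laplace-transform the PDE of (i) in the variable $x$. Boundedness of $g$ justifies the interchange of integrals and derivatives; integration by parts in $x$ on the left-hand side gives $\int_0^\infty e^{-\lambda x}\px g\,dx = \lambda\tilde g(\lambda,h) - f(h)$ (the boundary term at $\infty$ vanishes since $\lambda>0$), and rearrangement yields exactly \eqref{FKlam}. Bounded-solution uniqueness is then immediate: since $V+\lambda\ge\lambda>0$ uniformly on $\R$, every nontrivial solution of the homogeneous equation \eqref{FKlamaux} must blow up exponentially at at least one of $\pm\infty$ (by a Sturm comparison against $e^{\pm\sqrt{2\lambda}\,h}$), so no bounded homogeneous perturbation can be added.

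For part (iii), I would construct the Green's function of the Sturm--Liouville-type operator $\tfrac12\ph^2 - (V(h)+\lambda)$ from the two decaying solutions $\phi_\pm$. The key algebraic fact is that this operator carries no first-derivative term, so the Wronskian $W(h):=\phi_+(\lambda,h)\phi'_-(\lambda,h) - \phi_-(\lambda,h)\phi'_+(\lambda,h)$ satisfies $W'(h)\equiv 0$ (differentiate and invoke \eqref{FKlamaux}); hence $W$ equals the constant $\phi_+(\lambda,0)\phi'_-(\lambda,0) - \phi_-(\lambda,0)\phi'_+(\lambda,0)$ appearing as the prefactor in \eqref{FKlamsln}, and it is nonzero because $\phi_+$ and $\phi_-$ have incompatible behavior at $+\infty$ and $-\infty$ respectively and are therefore linearly independent. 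Variation of parameters applied to \eqref{FKlam} then produces \eqref{FKlamsln} directly, with convergence of both integrals guaranteed by the exponential decay $|\phi_\pm(\lambda,\chi)|\le Ce^{-\sqrt{2\lambda}\,|\chi|}$ at the relevant infinity. The main obstacle in the whole argument is the $C^{1,2}$-regularity of $g$ needed to legitimate the infinitesimal-generator computation in (i); once that is in hand, parts (ii) and (iii) reduce to classical ODE manipulations.
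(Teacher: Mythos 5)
The paper gives no proof of this theorem: the Appendix states explicitly that the facts collected there are quoted without proof, with Janson (2007) cited for a full argument, so there is no internal derivation to compare against and your sketch has to stand on its own. Your three-step plan is the standard one and is conceptually sound: infinitesimal conditioning plus an It\^o/martingale argument for existence and uniqueness in (i); Laplace transform in $x$ with termwise differentiation for (ii); constant Wronskian and variation of parameters for (iii). One small point on (ii): ``Sturm comparison'' is not quite the right name since the equation is non-oscillatory; a cleaner uniqueness argument is that for any bounded homogeneous solution $\phi$ the function $E=\phi^2$ satisfies $E''=2(\phi')^2+4(V+\lambda)\phi^2\ge 4\lambda E\ge0$, so $E$ is convex, nonnegative and bounded, hence constant, hence zero. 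The substance of your claim is right, though.

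There is, however, a genuine problem in (iii) that you have glossed over by asserting that variation of parameters ``produces \eqref{FKlamsln} directly''. Carrying the step out actually yields a \emph{plus} sign between the two integral terms, not the minus sign printed in the theorem. Writing the ODE as $\tilde g''-2(V+\lambda)\tilde g=-2f$, taking $\tilde g=A\phi_++B\phi_-$ with $A'\phi_++B'\phi_-=0$, solving the resulting linear system with the constant Wronskian $W=\phi_+\phi_-'-\phi_-\phi_+'$, and fixing the integration constants by requiring boundedness at $\pm\infty$ gives $A(h)=\frac{2}{W}\int_{-\infty}^h\phi_- f$ and $B(h)=\frac{2}{W}\int_h^\infty\phi_+f$, hence
\begin{align*}
\tilde g(\lambda,h)
=\frac{2}{W}\left[\phi_+(\lambda,h)\int_{-\infty}^h\phi_-(\lambda,\chi)f(\chi)\,d\chi
+\phi_-(\lambda,h)\int_h^\infty\phi_+(\lambda,\chi)f(\chi)\,d\chi\right].
\end{align*}
A quick sanity check exposes the discrepancy: with $V\equiv0$, $\lambda=1/2$, $f\equiv1$, the bounded solution of $\tilde g''=\tilde g-2$ is $\tilde g\equiv2$; taking $\phi_\pm(h)=e^{\mp h}$, $W=2$, the ``$+$'' formula gives $2$ while \eqref{FKlamsln} as printed gives $0$. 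So the theorem statement has a sign misprint, which you should have caught rather than reproduced. This is consistent with the rest of the paper: the application in Lemma~\ref{lem:tildephi}, namely formula \eqref{varphitildeexpr}, has all three terms with a $+$ sign, which is precisely the corrected expression specialized to $V(h)=|h|$, $f(h)=u(|h|)$ and prefactor $-1/(u(\lambda)u'(\lambda))=2/W(0)$.
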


\medskip

\noindent For a full proof of these statements see e.g. \cite{janson_2007}.

\subsection{Airy functions}
\label{ss:appAiry}

In the present Appendix we collect the \emph{necessary minimum} information about Airy function needed and used in this paper. For an exhaustive treatment of Airy functions see \cite{abramowitz_stegun_64}, \cite{flajolet_louchard_01}, \cite{kearney_majumdar_05}, \cite{reid_95}.

\medskip

\noindent Consider the second order linear ODE for $f:\R\to\R$:
\begin{align}
\label{airyeq}
f''(h)=2h f(h),
\end{align}
commonly called Airy's equation. We denote by $u:\R\to\R$ the unique solution of \eqref{airyeq} with boundary conditions
\begin{align}
\label{ubc}
u(0)=1,
\qquad
\lim_{h\to\infty} u(h) = 0.
\end{align}
In terms of the conventionally defined Airy function of the first kind $\Ai(\cdot)$ we have
\begin{align*}
u(h)= 3^{2/3}\Gamma(2/3)\Ai(2^{1/3}h),
\end{align*}
and we also have
\begin{align}
\label{uprime0}
&
u'(0)=-\frac{6^{1/3} \Gamma(2/3)}{\Gamma(1/3)},
\\[6pt]
\label{airyasymp}
&
u(h)\sim \frac{3^{2/3} \Gamma(2/3)}{2^{13/12}\sqrt{\pi}} h^{-1/4} \exp\left(-(2^{3/2}/3) h^{3/2}\right),
&&
\text{ as } h\to\infty
\\[6pt]
\notag
&
u(h)\sim \frac{3^{2/3} \Gamma(2/3)}{2^{1/12}\sqrt{\pi}} \abs{h}^{-1/4} \sin \left((2^{3/2}/3) \abs{h}^{3/2} + \pi/4\right),
&&
\text{ as } h\to-\infty.
\end{align}
The function $u:\R\to\R$ is convex and decreasing on $[0,\infty)$ and oscillates indefinitely on $(-\infty,0]$. The function $u$ extends as an entire function to $\C\ni z\mapsto u(z)\in \C$.

\begin{figure}
\includegraphics[width = 7cm]{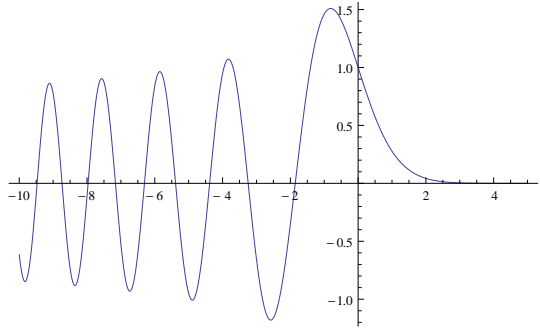}
 \caption{The normalized Airy function $u$}
\end{figure}

\medskip

\noindent We denote by $\delta'_k$, $k=1,2, \dots$ the consecutive zeros of $h\mapsto u'(-h)$. It is known that
\[
0<\delta'_1<\delta'_2<\dots<\delta'_k<\dots,
\]
and their asymptotics, as $k\to\infty$, are
\[
\delta'_k\sim\frac12 (3\pi k)^{2/3}.
\]
(Finer asymptotics are also available but not needed for our purposes in this paper.)

\medskip

\noindent The following key identity holds
\begin{align}
\label{keyidentity}
&
\frac{u''(z)}{u'(z)}
=
-\sum_{k=1}^\infty \frac{1}{\delta'_k}\cdot\frac{z}{\delta'_k+z},
&&
z\notin\{-\delta'_k: k=1,2\dots\}.
\end{align}
It is easily seen that the sum on the right hand side is absolutely convergent and the two sides have simple poles at $-\delta'_k$, $k=1,2,\dots$, with the same residues. Thus, the two sides can differ in an entire function only. For a full proof of this formula see \cite{flajolet_louchard_01}.

From \eqref{keyidentity} an infinite family of \emph{trace formulas} for $\sum_{k=1}^\infty (\delta'_k)^{-n}$ ($n \ge 2$) are expressed in terms of $u'(0)$. We will use only the first three of these:
\begin{align}
\label{traceformulas}
\sum_{k=1}^\infty (\delta'_k)^{-2}=\frac{-2}{u'(0)},
\qquad\qquad
\sum_{k=1}^\infty (\delta'_k)^{-3}=2,
\qquad\qquad
\sum_{k=1}^\infty (\delta'_k)^{-4}=\frac{2}{u'(0)^2}.
\end{align}
These are easily checked by differentiating both sides of \eqref{traceformulas} at $z=0$ and using \eqref{airyeq}.
We will denote
\begin{align}
\label{mixweights}
p_k:= \frac{u'(0)^2}{2}(\delta'_k)^{-4}.
\end{align}
Note that $\sum_{k=1}^\infty p_k = 1$.

\subsection{Mittag-Leffler distributions}
\label{ss:appMittag-Leffler}

Following the terminology of \cite{feller_49} and \cite{darling_kac_57}, we will call Mittag-Leffler distribution of index $\alpha\in[0,1]$ the probability distribution $F_\alpha: \R_+ \to [0,1]$  whose moment generating function (Laplace transform) is
\begin{align}
\label{ML}
\int_0^\infty e^{- yx} dF_\alpha(x)
=
\sum_{k=0}^\infty \frac{(-y)^k} {\Gamma(\alpha k +1)}
=:
E_\alpha (-y).
\end{align}
The function $y\mapsto E_\alpha(y)$ defined by the power series in \eqref{ML} is the Mittag-Leffler function of index $\alpha\in[0,1]$. It is proved in \cite{pollard_48} -- where reference is also made to unpublished alternative proof of W. Feller -- that, for $\alpha\in[0,1]$,  $[0,\infty)\ni y\mapsto E_{\alpha}(-y)$ is indeed completely monotone, and thus the Laplace transform of a probability density function on $\R_+$.

\medskip

\noindent The moments of the Mittag-Leffler distributions are
\begin{align}
\label{MLmoments}
\int_0^\infty x^m dF_\alpha(x)
=
\frac{m!}{\Gamma(\alpha m +1)}.
\end{align}
$\alpha=0, \frac12, 1$ are special cases:
\begin{align*}
F_0(x)=1-e^{-x},
\qquad\qquad
F_{1/2}(x)=\sqrt{\frac{2}{\pi}} \int_0^x e^{-y^2/2}dy,
\qquad\qquad
F_1(x)=\ind{x>1}.
\end{align*}

For $\alpha\in[0,1)$ the distribution function $F_\alpha$ is smooth and the following power series expansion holds for the density function $f_\alpha(x):=F'_\alpha(x)$, see \cite{pollard_48}:
\begin{align}
\label{MLdensity}
f_\alpha(x)=
\frac{1}{\pi} \sum_{k=0}^\infty
\frac{\sin((k+1)\alpha\pi) \Gamma((k+1)\alpha)}{k!} (-x)^{k}
\end{align}
This power series defines actually an entire function on $\C$. See the picture below for a representation of those functions with $\alpha = 0, 1/2$ and $2/3$.

\begin{figure}[!h]
\includegraphics[width = 10cm]{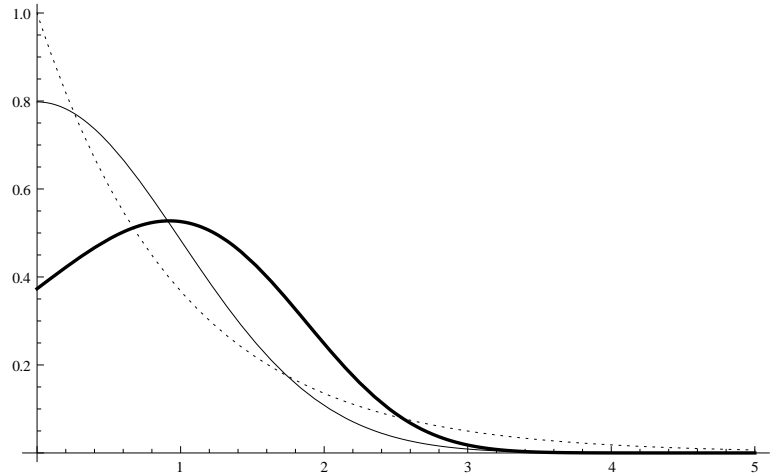}
\caption{Mittag-Leffler density functions for $\alpha = 0$ (dotted), $\alpha = 1/2$ (plain) and $\alpha = 2/3$ (thick)}
\end{figure}

\noindent We are interested in the case $\alpha=2/3$. This particular case is also expressed in terms of a confluent hypergeometric function, see \eqref{ML2/3asTricomifunction} below.

\subsection{Confluent hypergeometric functions}
\label{ss:confluent_hypergeometric_functions}

See \cite{gradstein_ryzhik_46}, \cite{abramowitz_stegun_64} for details on confluent hypergeometric functions. We collect here only a few facts needed for our purposes.

Let $a>0$ and $b\in\R$ be fixed parameters. We define the \emph{confluent hypergeometric functions of the second kind}, or \emph{Tricomi functions}
\[
U(a,b;\cdot):\R_+\mapsto \R
\]
by the integrals
\begin{align}
\label{Tricomifnc}
U(a,b;z)
=
\frac{1}{\Gamma(a)}
\int_0^\infty e^{-zs}s^{a-1}(1+s)^{b-a-1} ds.
\end{align}
(For more general definitions see \cite{gradstein_ryzhik_46}, \cite{abramowitz_stegun_64}.) In our formulas the particular cases $U(1/6,2/3;\cdot)$,  $U(1/2,4/3;\cdot)$, and $U(1/6,4/3;\cdot)$ will occur.

\emph{Kummer's identity}
\begin{align}
\label{Kummer's_identity}
U(a,b;z)= z^{1-b} U(1+a-b, 2-b, z)
\end{align}
holds if $b<a+1$.

The Mittag-Leffler density function $f_{2/3}$, which appears in \eqref{nu1} is also expressed in terms of $U(1/6, 4/3;\cdot)$ as follows:
\begin{align}
\label{ML2/3asTricomifunction}
f_{2/3} (z)
=
\frac{2^{1/3}}{\sqrt{3 \pi}}
z \,e^{-(4z^3)/27} U(1/6, 4/3;(4z^3)/27).
\end{align}
Computing the moments of the right hand side of \eqref{ML2/3asTricomifunction} we obtain the expressions \eqref{MLmoments}.

\medskip

\noindent \textbf{Acknowledgements:}
The following grant supports are acknowledged: French-Hun\-ga\-ri\-an
Balaton/PHC grant 19482NA for mobility support; T\'AMOP -
4.2.2.B-10/1--2010-0009 supporting research at the Graduate School of
Mathematics TU Budapest; OTKA K100473 partially supporting BT's research.

\vskip4cm

\noindent
{\sc Affiliation of authors:}
\\[10pt]
\hbox{
\vbox
{\hsize=15mm
\noindent 
LD:
\\
}
\vbox
{\hsize=12cm
\noindent
DMA - \'Ecole Normale Sup\'erieure, Paris
\\ 
email: {\tt Laure.Dumaz@ens.fr}}
}
\\[10pt]
\hbox{
\vbox
{\hsize=15mm
\noindent 
BT:
\\ \\
}
\vbox
{\hsize=12cm
\noindent
Institute of Mathematics, Budapest University of Technology, and 
\\
School of Mathematics, University of Bristol
\\
email: {\tt balint@math.bme.hu, balint.toth@bristol.ac.uk}}
}


\begin{thebibliography}{99}

\bibitem{abramowitz_stegun_64}
M. Abramowitz, I.A. Stegun:
{\sl Handbook of Mathematical Functions.}
Dover, New York, 1964

\bibitem{amit_parisi_peliti_83}
D. Amit, G. Parisi, L. Peliti:
Asymptotic behavior of the `true' self-avoiding walk.
{\sl Phys. Rev. B} {\bf 27}: 1635--1645 (1983)

\bibitem{borodin_salminen_02}
A. Borodin, P. Salminen:
Handbook of Brownian motion -- facts and formulae.
{\sl Probability and its Applications}, Birkh\"aser Verlag, Basel, 2002.

\bibitem{darling_kac_57}
D. A. Darling, M. Kac:
On occupation times for Markoff processes.
{\sl Trans. Amer. Math. Soc.} {\bf 84}: 444-458 (1957)

\bibitem{dumaz_11}
L. Dumaz:
Large deviations and path properties of the true self-repelling motion.
submitted (2011), {\tt http://arxiv.org/abs/1105.2948}

\bibitem{feller_49}
W. Feller:
Fluctuation theory of recurrent events.
{\sl Trans. Amer. Math. Soc.} {\bf 67}: 98-119 (1949)

\bibitem{flajolet_louchard_01}
P. Flajolet, G. Louchard:
Analytic variations on the {A}iry distribution.
{\sl Algorithmica}, {\bf 31}: 361--377  (2001)

\bibitem{fontes_isopi_newman_ravishankar_04}
L-R. Fontes, M. Isopi, C.M. Newman, K. Ravishankar: 
The Brownian web: characterization and convergence. 
{\sl Ann. Probab.} {\bf 32}: 2857-2883 (2004)

\bibitem{gradstein_ryzhik_46}
I.S. Gradshteyn, I.M. Ryzhik:
{\sl Table of Integrals, Series, and Products.}
Seventh edition. Academic Press, 2007.
(Original Russian edition: Moscow, 1946)

\bibitem{janson_2007}
S. Janson:
Brownian excursion area, Wright's constants in graph enumeration, and other Brownian areas.
{\sl Probab. Surveys}, {\bf 4}: 80-145 (2007)

\bibitem{kac_49}
M. Kac:
On distributions of certain {W}iener functionals.
{\sl Trans. Amer. Math. Soc.} {\bf 65}: 1--13 (1949)

\bibitem{karatzas_shreve_91}
I. Karatzas, S.~E. Shreve:
{\sl Brownian motion and stochastic calculus}, second edition,
{\sl  Graduate Texts in Mathematics}, {bf vol. 113},
Springer-Verlag, New York, 1991.

\bibitem{kearney_majumdar_05}
M.~J. Kearney, S.~N. Majumdar:
On the area under a continuous time {B}rownian motion till its first-passage time.
{\sl J. Phys. A}, {\bf 38}: 4097--4104 (2005)

\bibitem{newman_04}
C.~M. Newman, K. Ravishankar:
Convergence of the T\'oth lattice filling curve to the T\'oth-Werner plane filling curve.
{\sl ALEA. Lat. Am. J. Probab. Math. Stat.} {\bf 1}: 333-346 (2006)

\bibitem{obukhov_peliti_83}
S.P. Obukhov, L. Peliti:
Renormalisation of the ``true'' self-avoiding walk.
{\sl J. Phys. A}, {\bf 16}: L147--L151 (1983)

\bibitem{peliti_pietronero_87}
L. Peliti, L. Pietronero:
Random walks with memory.
{\sl Riv. Nuovo Cimento}, {\bf 10}: 1--33 (1987)

\bibitem{pollard_48}
H. Pollard:
The completely monotonic character of the Mittag-Leffler
function $E_{\alpha}(-x)$.
{\sl Bull. Amer. Math. Soc.} {\bf 54}: 1115-1116 (1948)

\bibitem{reid_95}
W.~H. Reid:
Integral representations for products of {A}iry functions.
{\sl Z. Angew. Math. Phys.} {\bf 46}: 159--170 (1995)


\bibitem{simon_05}
B. Simon:
Functional integration and quantum physics. Second edition.
{\sl American Mathematical Society}, 2005

\bibitem{toth_95}
B. T\'oth:
The 'true' self-avoiding walk with bond repulsion on ${\mathbb Z}$: limit theorems.
{\sl Ann. Probab.}, {\bf 23}: 1523-1556 (1995)

\bibitem{toth_veto_11}
B. T\'oth, B. Vet\H o:
Continuous time 'true' self-avoiding random walk on $\Z$.
{\sl ALEA. Lat. Am. J. Probab. Math. Stat.} {\bf 8}: 59-75 (2011)

\bibitem{toth_werner_98}
B. T\'oth, W. Werner:
The true self-repelling motion.
{\sl Probab. Theory Related Fields} {\bf 111}: 375-452 (1998)

\bibitem{tribe_zaboronski_11}
R. Tribe, O. Zaboronski:
Pfaffian formulae for one-dimensional coalescing and annihilating systems.
{\sl Electr. Journ. Probab.} {\bf 16}: 2080-2103 (2011)

\bibitem{warren_07}
J. Warren:
Dyson's Brownian motions, intertwining and interlacing.
{\sl Electr. Journ. Probab.} {\bf 12}: 573-590 (2007)

\end{thebibliography}
\end{document}